\nonstopmode \numberwithin{equation}{section}
\newtheorem{thm}{Theorem}[section]
\newtheorem{cor}{Corollary}[section]
\newtheorem{lem}{Lemma}[section]
\newtheorem{prop}[equation]{Proposition}
\newtheorem{conj}{Conjecture}
\theoremstyle{definition}
\newtheorem{prob}{Problem}[section]
\newtheorem{rem}{Remark}[section]
\newcounter{minutes}\setcounter{minutes}{\time}
\newcounter{hours}\setcounter{hours}{\time}
\newcounter {own}
\def\theown {\thesection       .\arabic{own}}
\newenvironment{pf}[1][]{%
 \vskip 3mm
 \noindent
 \ifthenelse{\equal{#1}{}}%
  {{\slshape Proof. }}%
  {{\slshape #1.} }%
 }%
{\qed\bigskip}
\newcounter{alphabet}
\newcounter{tmp}
\def\be{\begin{equation}}
\def\ee{\end{equation}}
\newcommand{\bee}{\begin{enumerate}}
\newcommand{\eee}{\end{enumerate}}
\newcommand{\blem}{\begin{lem}}
\newcommand{\elem}{\end{lem}}
\newcommand{\bthm}{\begin{thm}}
\newcommand{\ethm}{\end{thm}}
\newcommand{\bcor}{\begin{cor}}
\newcommand{\ecor}{\end{cor}}
\newcommand{\beg}{\begin{examp}}
\newcommand{\eeg}{\end{examp}}
\newcommand{\begs}{\begin{examples}}
\newcommand{\eegs}{\end{examples}}
\newcommand{\bdefe}{\begin{defin}}
\newcommand{\edefe}{\end{defin}}
\newcommand{\bprob}{\begin{prob}}
\newcommand{\eprob}{\end{prob}}
\newcommand{\bei}{\begin{itemize}}
\newcommand{\eei}{\end{itemize}}
\newcommand{\bcon}{\begin{conj}}
\newcommand{\econ}{\end{conj}}
\newcommand{\bcons}{\begin{conjs}}
\newcommand{\econs}{\end{conjs}}
\newcommand{\bprop}{\begin{prop}}
\newcommand{\eprop}{\end{prop}}
\newcommand{\br}{\begin{rem}}
\newcommand{\er}{\end{rem}}
\newcommand{\brs}{\begin{rems}}
\newcommand{\ers}{\end{rems}}
\newcommand{\bo}{\begin{obser}}
\newcommand{\eo}{\end{obser}}
\newcommand{\bos}{\begin{obsers}}
\newcommand{\eos}{\end{obsers}}
\newcommand{\bpf}{\begin{pf}}
\newcommand{\epf}{\end{pf}}
\newcommand{\ba}{\begin{array}}
\newcommand{\ea}{\end{array}}
\newcommand{\beq}{\begin{eqnarray}}
\newcommand{\beqq}{\begin{eqnarray*}}
\newcommand{\eeq}{\end{eqnarray}}
\newcommand{\eeqq}{\end{eqnarray*}}
\begin{document}

\title{Logarithmic coefficients of some close-to-convex functions}

\author{Md Firoz Ali}
\address{Md Firoz Ali,
Department of Mathematics,
Indian Institute of Technology Kharagpur,
Kharagpur-721 302, West Bengal, India.}
\email{ali.firoz89@gmail.com}

\author{A. Vasudevarao}
\address{A. Vasudevarao,
Department of Mathematics,
Indian Institute of Technology Kharagpur,
Kharagpur-721 302, West Bengal, India.}
\email{alluvasu@maths.iitkgp.ernet.in}

\subjclass[2010]{Primary 30C45, 30C55}
\keywords{Analytic, univalent, starlike, convex, close-to-convex functions, logarithmic coefficient.}

\def\thefootnote{}
\footnotetext{ {\tiny File:~\jobname.tex,
printed: \number\year-\number\month-\number\day,
          \thehours.\ifnum\theminutes<10{0}\fi\theminutes }
} \makeatletter\def\thefootnote{\@arabic\c@footnote}\makeatother

\begin{abstract}
The logarithmic coefficients $\gamma_n$ of an analytic and univalent function $f$ in the unit disk $\mathbb{D}=\{z\in\mathbb{C}:|z|<1\}$ with the normalization $f(0)=0=f'(0)-1$ are defined by $\log \frac{f(z)}{z}= 2\sum_{n=1}^{\infty} \gamma_n z^n$. In the present paper, we consider close-to-convex functions (with argument $0$) with respect to odd starlike functions and determine the sharp upper bound of $|\gamma_n|$, $n=1,2,3$ for such functions $f$.
\end{abstract}

\thanks{}

\maketitle
\pagestyle{myheadings}
\markboth{Md Firoz Ali and A. Vasudevarao }{Logarithmic coefficients of some close-to-convex functions}

\section{Introduction}

Let $\mathcal{A}$ denote the class of analytic functions $f$ in the unit disk $\mathbb{D}=\{z\in\mathbb{C}:|z|<1\}$ normalized by $f(0)=0=f'(0)-1$.
Any function $f$ in $\mathcal{A}$ has the following power series representation
\begin{equation}\label{p7-001}
f(z)= z+\sum_{n=2}^{\infty}a_n z^n.
\end{equation}
The class of univalent (i.e. one-to-one) functions in $\mathcal{A}$ is denoted by $\mathcal{S}$. A function $f\in\mathcal{A}$ is called starlike (convex respectively) if $f(\mathbb{D})$ is starlike with respect to the origin (convex respectively). Let $\mathcal{S}^*$ and $\mathcal{C}$ denote the class of starlike and convex functions in $\mathcal{S}$ respectively. It is well-known that a function $f\in\mathcal{A}$ is in $\mathcal{S}^*$ if and only if ${\rm Re\,}\left(zf'(z)/f(z)\right)>0$ for $z\in\mathbb{D}$. Similarly, a function $f\in\mathcal{A}$ is in $\mathcal{C}$ if and only if ${\rm Re\,}\left(1+zf''(z)/f'(z)\right)>0$ for $z\in\mathbb{D}$. From the above it is easy to see that $f\in\mathcal{C}$ if and only if $zf'\in\mathcal{S}^*$. Given $\alpha\in(-\pi/2,\pi/2)$ and $g\in\mathcal{S}^*$, a function $f\in\mathcal{A}$ is said to be close-to-convex with argument $\alpha$ and with respect to $g$ if
\begin{equation}\label{p7-003}
{\rm Re\,} \left(e^{i\alpha}\frac{zf'(z)}{g(z)}\right)>0 \quad\mbox{ for } z\in\mathbb{D}.
\end{equation}
Let $\mathcal{K}_{\alpha}(g)$ denote the class of all such functions. Let
$$
\mathcal{K}(g):= \bigcup_{\alpha\in(-\pi/2,\pi/2)} \mathcal{K}_{\alpha}(g) \quad\mbox{ and }\quad \mathcal{K}_{\alpha}:= \bigcup_{g\in\mathcal{S}^*} \mathcal{K}_{\alpha}(g)
$$
be the classes of functions called close-to-convex functions with respect to $g$ and close-to-convex functions with
argument $\alpha$, respectively. The class
$$
\mathcal{K}:= \bigcup_{\alpha\in(-\pi/2,\pi/2)} \mathcal{K}_{\alpha}= \bigcup_{g\in\mathcal{S}^*} \mathcal{K}(g)
$$
is the class of all close-to-convex functions. It is well-known that every close-to-convex function is univalent in $\mathbb{D}$ (see \cite{Duren-book}). Geometrically, $f\in\mathcal{K}$ means that the complement of the image-domain $f(\mathbb{D})$ is the union of non-intersecting half-lines.

For the function $f\in\mathcal{S}$,  the logarithmic coefficients $\gamma_n (n=1,2,\ldots)$ are defined by
\begin{equation}\label{p7-005}
\log \frac{f(z)}{z}= 2\sum_{n=1}^{\infty} \gamma_n z^n, \quad z\in\mathbb{D}.
\end{equation}
I.E. Bazilevich first noticed that the logarithmic coefficients are very essential in the coefficient problem of univalent functions. He estimated (see \cite{Bazilevich-1965,Bazilevich-1967}) in depending upon the positive Hayman constant (see \cite{Hayman-book}) how close the coefficients $\gamma_n$ $(n=1,2,\ldots)$ of the functions of class $\mathcal{S}$ are to the relative logarithmic coefficients of the Koebe function $k(z)=z/(1-z)^2$. He also estimated the value $\sum_{n=1}^{\infty} n|\gamma_n|^2r^{2n}$ which after multiplication by $\pi$ is equal to the area of the image of the disk $|z|<r<1$ under the function $\frac{1}{2} \log(f(z)/z)$ for  $f\in\mathcal{S}$. The celebrated de Branges' inequalities (the former Milin conjecture) for univalent
functions $f$ state that
$$
\sum_{k=1}^{n} (n-k+1)|\gamma_k|^2\le \sum_{k=1}^{n} \frac{n+1-k}{k},\quad n=1,2,\ldots,
$$
with equality if and only if $f(z)=e^{-i\theta}k(e^{i\theta}z), \theta\in\mathbb{R}$ (see \cite{Branges-1985}). De Branges \cite{Branges-1985} used this inequality to prove the celebrated Bieberbach conjecture. Moreover,
the de Branges' inequalities have also  been  the source of many other interesting inequalities involving logarithmic coefficients of $f\in\mathcal{S}$ such as (see \cite{Duren-Leung-1979})
$$
\sum_{k=1}^{\infty} |\gamma_k|^2\le \sum_{k=1}^{\infty} \frac{1}{k^2} =\frac{\pi^2}{6}.
$$
More attention has been given to the results of an average sense (see \cite{Duren-book,Duren-Leung-1979,Roth-2007}) than the exact upper bounds for $|\gamma_n|$ for functions in the class $\mathcal{S}$.
Very few exact upper bounds for $|\gamma_n|$ seem have been established. For the Koebe function $k(z)=z/(1-z)^2$, the logarithmic coefficients are $\gamma_n=1/n$. Since the Koebe function $k(z)$ plays the role of extremal function for most of the extremal problems in the class $\mathcal{S}$, it is expected that $|\gamma_n|\le \frac{1}{n}$ holds for functions in $\mathcal{S}$. But this is not true in general, even in order of magnitude \cite[Theorem 8.4]{Duren-book}. Indeed, there exists a bounded function $f$ in the class $\mathcal{S}$ with logarithmic coefficients $\gamma_n\ne O(n^{-0.83})$ (see \cite[Theorem 8.4]{Duren-book}).

By differentiating (\ref{p7-005}) and equating coefficients we obtain
\begin{eqnarray}
\gamma_1&=&\frac{1}{2} a_2\label{p7-010}\\
\gamma_2&=&\frac{1}{2}(a_3-\frac{1}{2}a_2^2)\label{p7-015}\\
\gamma_3&=&\frac{1}{2}(a_4-a_2a_3+\frac{1}{3}a_2^3)\label{p7-020}.
\end{eqnarray}
If $f\in\mathcal{S}$ then $|\gamma_1|\le 1$ follows from (\ref{p7-010}). Using Fekete-Szeg\"{o} inequality \cite[Theorem 3.8]{Duren-book} in (\ref{p7-015}), it is easy to obtain the following sharp estimate
$$
|\gamma_2|\le \frac{1}{2}(1+2e^{-2})=0.635\ldots.
$$
For $n\ge 3$, the problem seems much harder, and no significant upper bound for $|\gamma_n|$ when $f\in\mathcal{S}$ appear to be known.

For functions in the class $\mathcal{S}^*$, using the analytic characterization ${\rm Re\,}\left(zf'(z)/f(z)\right)>0$ for $z\in\mathbb{D}$ it is easy to prove that $|\gamma_n|\le \frac{1}{n}$ for $n\ge 1$ and equality holds for the Koebe function $k(z)=z/(1-z)^2$.  The inequality $|\gamma_n|\le \frac{1}{n}$ for $n\ge 2$ also holds for functions in the class $\mathcal{K}$ was claimed in a paper of Elhosh \cite{Elhosh-1996}. However, Girela \cite{Girela-2000} pointed out an error in the proof of Elhosh \cite{Elhosh-1996} and, hence, the result is not substantiated. Indeed, Girela proved that for each $n\ge 2$, there exists a function $f\in\mathcal{K}$ such that $|\gamma_n|> \frac{1}{n}$. Recently, it has been proved \cite{Thomas-2016} that $|\gamma_3|\le \frac{7}{12}$ for functions in $\mathcal{K}_0$ (close-to-convex functions with argument $0$) with the additional assumption that the second coefficient of the corresponding starlike function $g$ is real. But this estimate is not sharp as pointed out in \cite{Ali-Vasudevarao-2016} where the authors proved that $|\gamma_3|\le \frac{1}{18} (3+4 \sqrt{2})=0.4809$ for functions in $\mathcal{K}_0$ without assuming the additional assumption that the second coefficient of the corresponding starlike function $g$ is real. In the same paper, the authors also determined the sharp upper bound $|\gamma_3|\le \frac{1}{243} (28+19 \sqrt{19})=0.4560$ for close-to-convex functions with argument $0$ and with respect to the Koebe function and conjectured that this upper bound is also true for the whole class $\mathcal{K}_0$.

Let $\mathcal{S}_2^*$  denote the class of odd starlike functions and $\mathcal{F}$
denote  the class of close-to-convex functions with argument $0$ and with respect to odd starlike functions. That is,
$$
\mathcal{F}=\left\{f\in\mathcal{A}: {\rm Re\,} \frac{zf'(z)}{g(z)}>0, ~~ z\in\mathbb{D}, \mbox{ for some } g\in\mathcal{S}_2^* \right\}.
$$
It is important to note that the class $\mathcal{F}$ is rotationally invariant. In the present article, we determine the sharp upper bound of $|\gamma_n|$, $n=1,2,3$ for functions in $\mathcal{F}$.

\section{Main Results}\label{Main Results}

Let $\mathcal{P}$ denote the class of analytic functions $P$ of the form
\begin{equation}\label{p7-025}
P(z)= 1+\sum_{n=1}^{\infty}c_n z^n
\end{equation}
such that  ${\rm\, Re\,} P(z)>0$ in $\mathbb{D}$.
Functions in $\mathcal{P}$ are sometimes called Carath\'{e}odory functions. To prove our main results, we need some preliminary lemmas.

\begin{lem}\cite[p. 41]{Duren-book}\label{p7-lemma001}
For a function $P\in\mathcal{P}$ of the form (\ref{p7-025}), the sharp inequality $|c_n|\le 2$ holds for each $n\ge 1$. Equality holds for the function $P(z)=(1+z)/(1-z)$.

\end{lem}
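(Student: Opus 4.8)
The plan is to extract each coefficient $c_n$ as a Fourier coefficient of the \emph{real part} of $P$ and then exploit the hypothesis $\real P>0$ together with the mean-value property for harmonic functions. First I would fix $r\in(0,1)$ and write $P(re^{i\theta})=1+\sum_{k=1}^{\infty}c_k r^k e^{ik\theta}$, so that multiplying by $e^{-in\theta}$ and integrating over $[0,2\pi]$ isolates $c_n r^n$ for each $n\ge 1$. The key observation is that the conjugate $\overline{P(re^{i\theta})}$ involves only the frequencies $e^{-ik\theta}$ with $k\ge 0$, so its $n$-th Fourier coefficient vanishes for every $n\ge 1$. Adding the two identities and using $P+\overline{P}=2\real P$ produces the clean formula
$$
c_n r^n=\frac{1}{\pi}\int_0^{2\pi}\real P(re^{i\theta})\,e^{-in\theta}\,d\theta,\qquad n\ge 1.
$$

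With this in hand the estimate is immediate. Since $\real P>0$ on $\mathbb{D}$, taking absolute values and using $|e^{-in\theta}|=1$ yields $|c_n|\,r^n\le \frac{1}{\pi}\int_0^{2\pi}\real P(re^{i\theta})\,d\theta$. Because $\real P$ is harmonic, the mean-value property gives $\frac{1}{2\pi}\int_0^{2\pi}\real P(re^{i\theta})\,d\theta=\real P(0)=1$, whence $|c_n|\,r^n\le 2$ for every $r\in(0,1)$. Letting $r\to 1^-$ then delivers the desired bound $|c_n|\le 2$.

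For sharpness I would simply test the candidate $P(z)=(1+z)/(1-z)$: expanding as a geometric series gives $P(z)=1+2\sum_{n=1}^{\infty}z^n$, so $c_n=2$ for every $n\ge 1$ and the inequality is attained. I do not anticipate a genuine obstacle, since the result is classical; the only point requiring care is the symmetrisation step that replaces $P$ by $\real P$. Writing $c_n r^n$ directly as a Fourier coefficient of $P$ itself would leave no way to invoke positivity, so it is essential to use $P+\overline{P}=2\real P$ before applying the positivity hypothesis and the mean-value identity. (Alternatively one could quote the Herglotz representation $P(z)=\int_{|\zeta|=1}\frac{\zeta+z}{\zeta-z}\,d\mu(\zeta)$ for a probability measure $\mu$ on $\partial\mathbb{D}$, read off $c_n=2\int\bar\zeta^{\,n}\,d\mu(\zeta)$, and bound $|c_n|\le 2\mu(\partial\mathbb{D})=2$; but the integral computation above is fully self-contained.)
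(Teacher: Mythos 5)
Your proof is correct, and it is essentially the argument behind the paper's citation: the paper states this classical lemma of Carath\'eodory without proof, referring to Duren's book (p.~41), where exactly this Fourier-coefficient argument (extracting $c_nr^n$ from $\real P$ via $P+\overline{P}$, then using positivity and the mean value $\real P(0)=1$) is given. The sharpness check $P(z)=(1+z)/(1-z)=1+2\sum_{n\ge 1}z^n$ is also as expected, so there is nothing to add.
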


\begin{lem}\cite{Ma-Minda-1992}\label{p7-lemma003}
Let $P\in\mathcal{P}$ be of the form (\ref{p7-025}) and $\mu$ be a complex number. Then
\begin{equation}\label{firoz_vasu_p1_i030}
|c_2-\mu c_1^2|\le 2\, \max\{1,|2\mu-1|\}.
\end{equation}
The result is sharp for the functions given by $P(z)=\frac{1+z^2}{1-z^2}$ and $P(z)=\frac{1+z}{1-z}$.
\end{lem}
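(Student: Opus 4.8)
The plan is to establish this Fekete--Szeg\H{o}-type inequality of Ma and Minda by reducing it to a one-real-parameter optimization. First I would exploit the rotational symmetry of the functional: replacing $P(z)$ by $P(e^{-i\theta}z)\in\mathcal{P}$ multiplies $c_n$ by $e^{-in\theta}$, so $c_2-\mu c_1^2$ is multiplied by the unimodular factor $e^{-2i\theta}$ and $|c_2-\mu c_1^2|$ is left unchanged. Choosing $\theta$ suitably I may therefore assume that $c_1=c$ is real with $c\in[0,2]$, the bound $c\le 2$ coming from Lemma~\ref{p7-lemma001}.

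The key analytic input is the standard coefficient parametrization for Carath\'eodory functions, obtainable from the Herglotz representation $P(z)=\int_{|x|=1}\frac{1+xz}{1-xz}\,d\sigma(x)$ with $\sigma$ a probability measure on the circle (which yields $c_n=2\int x^n\,d\sigma$): there exists a complex number $\zeta$ with $|\zeta|\le 1$ such that
\begin{equation*}
2c_2=c_1^2+(4-c_1^2)\zeta .
\end{equation*}
I expect that justifying this representation --- in particular that $\zeta$ may be taken genuinely complex with $|\zeta|\le 1$, which uses the full force of the positivity of $\real P$ and not merely the coefficient bounds $|c_n|\le2$ of Lemma~\ref{p7-lemma001} --- is the only nonroutine ingredient; everything afterwards is elementary estimation.

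Granting this, and writing $c_1=c\in[0,2]$, I would substitute to obtain
\begin{equation*}
c_2-\mu c_1^2=\tfrac12(1-2\mu)c^2+\tfrac12(4-c^2)\zeta ,
\end{equation*}
so that $|\zeta|\le1$ and the triangle inequality give
\begin{equation*}
|c_2-\mu c_1^2|\le \tfrac12|2\mu-1|\,c^2+\tfrac12(4-c^2)=2+\tfrac12 c^2\bigl(|2\mu-1|-1\bigr).
\end{equation*}
This last expression is affine in $c^2\in[0,4]$, hence maximized at an endpoint. If $|2\mu-1|\le1$ the coefficient of $c^2$ is nonpositive and the maximum $2$ occurs at $c=0$; if $|2\mu-1|\ge1$ the expression is nondecreasing and the maximum $2+2(|2\mu-1|-1)=2|2\mu-1|$ occurs at $c=2$. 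In both cases the bound equals $2\max\{1,|2\mu-1|\}$, which is the assertion.

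For sharpness I would verify that the two endpoints correspond exactly to the claimed extremal functions: $c=0$ (with $\zeta=1$) is realized by $P(z)=\frac{1+z^2}{1-z^2}$, for which $c_1=0$, $c_2=2$ and $|c_2-\mu c_1^2|=2$; while $c=2$ is realized by $P(z)=\frac{1+z}{1-z}$, for which $c_1=c_2=2$ and $|c_2-\mu c_1^2|=|2-4\mu|=2|2\mu-1|$. As an alternative route that avoids the rotation step, one could instead invoke the known inequality $|c_2-\tfrac12 c_1^2|\le 2-\tfrac12|c_1|^2$ together with the splitting $c_2-\mu c_1^2=(c_2-\tfrac12 c_1^2)+(\tfrac12-\mu)c_1^2$, which leads to the identical optimization in the real variable $t=|c_1|^2\in[0,4]$.
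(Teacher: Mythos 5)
Your proof is correct. Note that the paper itself offers no proof of this lemma at all: it is quoted verbatim from Ma--Minda \cite{Ma-Minda-1992}, so there is no internal argument to compare against. Your reconstruction is the standard one and is essentially the argument in the cited source. The key ingredient you use, the relation $2c_2=c_1^2+(4-c_1^2)\zeta$ with $|\zeta|\le 1$, is exactly the first identity of the Libera--Z{\l}otkiewicz lemma, which the paper records separately as Lemma~\ref{p7-lemma010}; so your proof in effect derives Lemma~\ref{p7-lemma003} from Lemma~\ref{p7-lemma010}, which is a legitimate and self-contained route given the paper's toolkit. All the individual steps check out: the rotation $P(z)\mapsto P(e^{-i\theta}z)$ does multiply $c_2-\mu c_1^2$ by the unimodular factor $e^{-2i\theta}$ (both terms pick up $e^{-2i\theta}$), so normalizing $c_1=c\in[0,2]$ is harmless; the substitution gives $c_2-\mu c_1^2=\tfrac12(1-2\mu)c^2+\tfrac12(4-c^2)\zeta$; the triangle inequality yields an expression affine in $c^2\in[0,4]$, whose endpoint values $2$ and $2|2\mu-1|$ produce the stated maximum; and the two extremal functions $(1+z^2)/(1-z^2)$ (with $c_1=0$, $c_2=2$) and $(1+z)/(1-z)$ (with $c_1=c_2=2$) realize the bound in the respective cases $|2\mu-1|\le 1$ and $|2\mu-1|\ge 1$. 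Your closing remark that one can instead split $c_2-\mu c_1^2=(c_2-\tfrac12 c_1^2)+(\tfrac12-\mu)c_1^2$ and use $|c_2-\tfrac12 c_1^2|\le 2-\tfrac12|c_1|^2$ is also sound, and has the small advantage of avoiding the rotation step entirely; the two routes are equivalent, since that inequality is just a restatement of the Libera--Z{\l}otkiewicz relation.
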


%


\begin{lem}\cite{Libera-Zlotkiewicz-1982}\label{p7-lemma010}
Let $P\in\mathcal{P}$ be of the form (\ref{p7-025}). Then there exist $x, t\in\mathbb{C}$ with $|x|\le 1$ and $|t|\le 1$ such that
\begin{eqnarray*}
2c_2&=& c_1^2 + x(4 - c_1^2)\quad\mbox { and }\\
4c_3&=& c_1^3+ 2(4-c_1^2)c_1x-c_1(4-c_1^2)x^2+2(4-c_1^2)(1-|x|^2)t.
\end{eqnarray*}
\end{lem}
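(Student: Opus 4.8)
The plan is to read the two free parameters off the Taylor expansion of the Schwarz function attached to $P$, running one extra step of the Schur algorithm for the second identity. First I would set $\omega(z)=(P(z)-1)/(P(z)+1)$, which maps $\mathbb{D}$ into itself (as $\operatorname{Re}P>0$) and fixes the origin; multiplying out $(P-1)\cdot(P+1)^{-1}$ gives
\[
\omega(z)=\tfrac12 c_1 z+\tfrac12\bigl(c_2-\tfrac12 c_1^2\bigr)z^2+\tfrac12\bigl(c_3-c_1c_2+\tfrac14 c_1^3\bigr)z^3+\cdots,
\]
so by the Schwarz lemma $u(z):=\omega(z)/z$ maps $\mathbb{D}$ into $\overline{\mathbb{D}}$ with $u(0)=c_1/2$ and $u'(0)=\tfrac12(c_2-\tfrac12 c_1^2)$.

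For the first identity I would invoke the Schwarz--Pick estimate $|u'(0)|\le 1-|u(0)|^2$, which says exactly that $|2c_2-c_1^2|\le 4-|c_1|^2$. Since $|c_1|\le 2$ by Lemma \ref{p7-lemma001}, a direct computation gives $|4-c_1^2|^2-(4-|c_1|^2)^2=16|c_1|^2\sin^2(\arg c_1)\ge 0$, hence $4-|c_1|^2\le|4-c_1^2|$; defining $x=(2c_2-c_1^2)/(4-c_1^2)$ therefore yields $|x|\le 1$ together with the stated relation $2c_2=c_1^2+x(4-c_1^2)$, the degenerate case $c_1^2=4$ (forcing $|c_1|=2$ and $u$ constant) being handled directly.

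For the second identity I would perform one more Schur reduction: with $a=u(0)$ and the disk automorphism $\sigma_a(w)=(w-a)/(1-\bar a w)$, the map $u_1(z)=\sigma_a(u(z))/z$ again sends $\mathbb{D}$ into $\overline{\mathbb{D}}$, and iterating once more produces a value $t$ with $|t|\le 1$. Expanding $u_1(0)$ and $u_1'(0)$ in terms of $u(0),u'(0),u''(0)$ and substituting the $z^3$-coefficient above expresses $4c_3$ through $c_1$, the first-step parameter, and $t$. The main obstacle I anticipate is entirely the bookkeeping of this last step: the $x$ occurring in the statement is the normalized quantity $(2c_2-c_1^2)/(4-c_1^2)$ rather than the bare Schur parameter $u'(0)/(1-|u(0)|^2)$, and reconciling the two—so that the combination collapses to exactly $c_1^3+2(4-c_1^2)c_1x-c_1(4-c_1^2)x^2+2(4-c_1^2)(1-|x|^2)t$ with $|t|\le 1$—is where the real work lies. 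If this algebra proves unwieldy I would instead use the Riesz--Herglotz representation $P(z)=\int_{\partial\mathbb{D}}\frac{1+\zeta z}{1-\zeta z}\,d\mu(\zeta)$, under which $c_n=2\int_{\partial\mathbb{D}}\zeta^n\,d\mu(\zeta)$; there the first identity reduces to the one-line computation $m_2-m_1^2=\int(\zeta-m_1)^2\,d\mu$ with $m_n=\int\zeta^n\,d\mu$ (giving $|m_2-m_1^2|\le 1-|m_1|^2$ since $|\zeta|=1$), and the existence of $t$ in the second identity becomes the solvability of the truncated trigonometric moment problem for $m_3$, i.e.\ the positive semidefiniteness of the associated Toeplitz matrix.
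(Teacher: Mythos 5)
A preliminary remark: the paper does not prove this lemma at all --- it is quoted from \cite{Libera-Zlotkiewicz-1982} --- so your proposal has to be judged against the classical Schur-algorithm/Toeplitz argument rather than against anything in the text.

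Your proof of the first identity is complete and correct: the expansion of $\omega=(P-1)/(P+1)$, the Schwarz--Pick bound $|2c_2-c_1^2|\le 4-|c_1|^2$, the elementary inequality $|4-c_1^2|^2-\bigl(4-|c_1|^2\bigr)^2=16\,(\operatorname{Im}c_1)^2\ge 0$, and the degenerate case $c_1^2=4$ are all handled properly.

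The genuine gap is in the second identity, and it is not the ``bookkeeping'' you describe: the reconciliation you defer is in fact impossible, because for non-real $c_1$ the lemma as printed is false. Carrying your own Schur reduction through to the end, with $\gamma_1=u_1(0)$ and $\gamma_2=u_2(0)$ (so $|\gamma_1|\le 1$, $|\gamma_2|\le 1$), the exact identities are
\begin{align*}
2c_2&=c_1^2+\gamma_1\bigl(4-|c_1|^2\bigr),\\
4c_3&=c_1^3+2c_1\gamma_1\bigl(4-|c_1|^2\bigr)-\overline{c_1}\,\gamma_1^{\,2}\bigl(4-|c_1|^2\bigr)
+2\bigl(4-|c_1|^2\bigr)\bigl(1-|\gamma_1|^2\bigr)\gamma_2,
\end{align*}
in which the modulus $|c_1|^2$ and the conjugate $\overline{c_1}$ occur, not $c_1^2$ and $c_1$. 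These coincide with the printed formulas precisely when $c_1$ is real, and then $x=\gamma_1$, $t=\gamma_2$ finishes the proof. When $c_1$ is not real, no choice of $t$ can absorb the discrepancy. Concretely, take $c_1\notin\mathbb{R}$ with $|c_1|<2$ and let $P=(1+\omega)/(1-\omega)$, where $\omega(z)=z\,\bigl(c_1/2+z\bigr)/\bigl(1+(\overline{c_1}/2)z\bigr)$; this is exactly the case $\gamma_1=1$ of the above. Then $2c_2-c_1^2=4-|c_1|^2$, so the first identity forces $x=(4-|c_1|^2)/(4-c_1^2)$, and a short computation gives
\begin{equation*}
4c_3-c_1^3-2(4-c_1^2)c_1x+c_1(4-c_1^2)x^2
=\frac{4\bigl(4-|c_1|^2\bigr)\bigl(c_1-\overline{c_1}\bigr)}{4-c_1^2},
\end{equation*}
whose modulus is $8\bigl(4-|c_1|^2\bigr)|\operatorname{Im}c_1|\,/\,|4-c_1^2|$, while the second identity leaves room for at most
\begin{equation*}
2\,|4-c_1^2|\bigl(1-|x|^2\bigr)
=\frac{2\bigl(|4-c_1^2|^2-(4-|c_1|^2)^2\bigr)}{|4-c_1^2|}
=\frac{32\,(\operatorname{Im}c_1)^2}{|4-c_1^2|}.
\end{equation*}
So one would need $4-|c_1|^2\le 4\,|\operatorname{Im}c_1|$, which fails, e.g., for $c_1=1+\tfrac{i}{10}$: there $|t|$ would have to be $2.99/0.4\approx 7.5$. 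Your Herglotz fallback stalls at the same point, since Toeplitz positivity places $4c_3$ in the disk centred at the conjugate-corrected expression above, not at the centre written in the lemma.

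The repair is cheap and is what the classical treatment does: state and prove the lemma under the normalization $c_1=c\in[0,2]$, where your Schur computation closes immediately with $x=\gamma_1$, $t=\gamma_2$. This also suffices for the present paper, which rotates so that $c_1=c\in[0,2]$ before ever invoking the $c_3$ formula; but your argument, as designed, aims at the literal statement for complex $c_1$, and that statement cannot be proved because it is not true.
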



\begin{thm}\label{p7-theorem-001}
Let $f\in\mathcal{F}$ be of the form (\ref{p7-001}). Then $|\gamma_1|\le \frac{1}{2}$, $|\gamma_2|\le \frac{1}{2}$ and $|\gamma_3|\le \frac{1}{972}(95+23 \sqrt{46})$. The inequalities are sharp.
\end{thm}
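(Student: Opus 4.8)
The plan is to convert the definition of $\mathcal{F}$ into coefficient inequalities for the Carath\'eodory class $\mathcal{P}$. Writing $zf'(z)=g(z)P(z)$ with $P\in\mathcal{P}$ of the form (\ref{p7-025}) and $g\in\mathcal{S}_2^*$ of the form $g(z)=z+b_3z^3+\cdots$, I would first use odd starlikeness: since $g$ is odd, $zg'(z)/g(z)$ is an even function with positive real part and value $1$ at the origin, so $zg'(z)/g(z)=Q(z^2)$ for some $Q\in\mathcal{P}$. Comparing coefficients gives $b_3=\tfrac12 Q'(0)$, hence $|b_3|\le 1$ by Lemma \ref{p7-lemma001}. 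Equating coefficients in $zf'=gP$ then yields $a_2=\tfrac12 c_1$, $a_3=\tfrac13(c_2+b_3)$, $a_4=\tfrac14(c_3+b_3c_1)$, and substituting into (\ref{p7-010})--(\ref{p7-020}) produces
\[
\gamma_1=\frac{c_1}{4},\qquad \gamma_2=\frac16\Big(c_2-\frac38 c_1^2\Big)+\frac{b_3}{6},\qquad \gamma_3=\frac{c_3}{8}-\frac{c_1c_2}{12}+\frac{c_1^3}{48}+\frac{b_3c_1}{24}.
\]

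The bounds for $\gamma_1$ and $\gamma_2$ should be immediate. Indeed $|\gamma_1|\le\tfrac12$ follows from $|c_1|\le 2$, and for $\gamma_2$ I would apply Lemma \ref{p7-lemma003} with $\mu=3/8$ to get $|c_2-\tfrac38c_1^2|\le 2\max\{1,\tfrac14\}=2$, whence $|\gamma_2|\le\tfrac16\cdot2+\tfrac16\cdot1=\tfrac12$ via $|b_3|\le1$. Sharpness here comes from $P(z)=(1+z)/(1-z)$ (respectively $(1+z^2)/(1-z^2)$) together with the odd Koebe function $g(z)=z/(1-z^2)$.

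The real work is $\gamma_3$. Since $\mathcal{F}$ is rotationally invariant I may assume $c_1=c\in[0,2]$. Inserting the Libera--Zlotkiewicz expressions (Lemma \ref{p7-lemma010}) for $c_2,c_3$ and simplifying leaves
\[
\gamma_3=\frac{c^3}{96}+\frac{c(4-c^2)}{48}x-\frac{c(4-c^2)}{32}x^2+\frac{(4-c^2)(1-|x|^2)}{16}t+\frac{b_3c}{24}.
\]
Using $|t|\le1$ and $|b_3|\le1$ to discard the last two terms reduces the task to maximizing $\big|\tfrac{c^3}{96}+\tfrac{c(4-c^2)}{48}x-\tfrac{c(4-c^2)}{32}x^2\big|+\tfrac{(4-c^2)(1-|x|^2)}{16}+\tfrac{c}{24}$ over $|x|\le1$, $c\in[0,2]$. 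The main obstacle is the phase of $x$: the linear and quadratic terms carry opposite signs, so the naive estimate $\big|\cdots\big|\le \tfrac{c^3}{96}+\tfrac{c(4-c^2)}{48}|x|+\tfrac{c(4-c^2)}{32}|x|^2$ over-counts and yields a strictly larger, non-sharp constant. Instead, writing $x=\rho e^{i\theta}$, I would expand the squared modulus as a downward quadratic in $\cos\theta$ and verify that in the relevant range its vertex exceeds $1$, so the maximum over $\theta$ occurs at $\theta=0$, i.e. at real $x=\rho\in[0,1]$; the boundary case $\theta=\pi$ and the interior-phase case must be checked to give a smaller total.

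With $x=\rho$ real the expression is, for each fixed $c$, a downward parabola in $\rho$ with vertex at $\rho=c/\big(3(c+2)\big)\in[0,1)$; substituting this value leaves the single-variable function $\tilde G(c)=(c^3-8c^2+6c+36)/144$ to maximize on $[0,2]$. Its derivative has the admissible root $c_*=(8-\sqrt{46})/3$, and reducing $\tilde G(c_*)$ using $3c_*^2-16c_*+6=0$ gives exactly $\tfrac{1}{972}(95+23\sqrt{46})$. For sharpness I would reverse the equality conditions, taking $g(z)=z/(1-z^2)$ (so $b_3=1$) and a Carath\'eodory $P$ with $c_1=c_*$, $x=c_*/(3(c_*+2))$ and $t=1$ all in phase, and check the resulting $f\in\mathcal{F}$ attains the bound.
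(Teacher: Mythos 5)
Your proposal follows the paper's skeleton almost exactly up to the key inequality: the same factorization $zf'=gP$, the same coefficient identities, Lemmas \ref{p7-lemma001} and \ref{p7-lemma003} for $\gamma_1,\gamma_2$ (your derivation of $|b_3|\le 1$ via $zg'(z)/g(z)=Q(z^2)$, $Q\in\mathcal{P}$, is a nice self-contained substitute for the paper's citation), and then rotation invariance, $|b_3|\le 1$, $|t|\le 1$ and Lemma \ref{p7-lemma010}, arriving precisely at the paper's inequality (\ref{p7-042}). Where you genuinely differ is the endgame. The paper maximizes $F(c,r,p)$ by a face-by-face search over the cube $[0,2]\times[0,1]\times[-1,1]$, resorting to Sturm's theorem and numerical root isolation on the face $r=1$; you instead maximize over the phase first and, in the case $\theta=0$, collapse everything to the one-variable function $\tilde G(c)=(c^3-8c^2+6c+36)/144$. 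I checked this part: your vertex $\rho^*=c/(3(c+2))$ evaluated at $c_*=(8-\sqrt{46})/3$ equals the paper's extremal $r=\frac{1}{75}(11-\sqrt{46})$, and $\tilde G(c_*)=\frac{1}{972}(95+23\sqrt{46})$ in exact arithmetic, so your main case is correct and cleaner than the paper's treatment of the face $p=1$. One precision issue you must repair: because of the absolute value, the quantity you maximize is \emph{not} literally a downward parabola in $\rho$. The fix is the observation that if the vertex of the quadratic in $\cos\theta$ is $\ge 1$ (your ``relevant range''), then writing the modulus squared as $|A+Be^{i\theta}+Ce^{2i\theta}|^2$ with $A\ge0$, $B\ge 0$, $C\le 0$, the vertex condition forces $A+C\ge 0$, hence $A+B+C\ge 0$, so the absolute value can be dropped and the parabola description is valid. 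Note the vertex does \emph{not} exceed $1$ for all $(c,r)$ (at $c=r=1$ it is $-4/3$), so this case split is essential, not a formality.

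The genuine gaps are the deferred cases and the sharpness construction. The cases you wave at --- phase maximum at $\theta=\pi$ and interior phase --- are not routine afterthoughts; they are where the paper spends most of its proof. In particular, the case $p=-1$ produces the competing maximum $\frac{4}{81}(41\sqrt{82}-121)\approx 12.359$, within half a percent of $\frac{4}{81}(95+23\sqrt{46})\approx 12.3947$, and the paper must also handle the sets where $\phi=0$ (where $F$ fails to be differentiable) and the degenerate boundary faces. So ``must be checked to give a smaller total'' is a substantive computation without which the claimed maximum, and hence the bound on $|\gamma_3|$, is not established. Second, for sharpness, Lemma \ref{p7-lemma010} only goes in one direction (from $P$ to $(x,t)$); reversing the equality conditions requires exhibiting an actual Carath\'{e}odory function whose data realize $c_1=c_*$, $x=\frac{1}{75}(11-\sqrt{46})$, $t=1$. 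The paper does this explicitly in (\ref{p7-140}) as a convex combination of kernels $\frac{1+uz}{1-uz}$; your plan needs this existence step (or an equivalent argument) to close.
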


\begin{proof}
Let $f\in\mathcal{F}$ be of the form (\ref{p7-001}). Then there exists an odd starlike function $g(z)=z+\sum_{n=1}^{\infty}b_{2n+1} z^{2n+1}$ and a Carath\'{e}odory function $P\in\mathcal{P}$ of the form (\ref{p7-025}) such that
\begin{equation}\label{p7-029a}
zf'(z)=g(z)P(z).
\end{equation}
Comparing the coefficients on the both sides of (\ref{p7-029a}) gives
\begin{equation}\label{p7-029abb}
a_2=\frac{1}{2}c_1,~~ a_3=\frac{1}{3}(b_3+c_2)~\mbox { and }~~ a_4=\frac{1}{4}(b_3c_1+c_3).
\end{equation}
Substituting $a_2, a_3$ and $a_4$ given by (\ref{p7-029abb}) in (\ref{p7-010}), (\ref{p7-015}) and (\ref{p7-020}) and then further simplification gives
\begin{eqnarray}
\gamma_1 &=& \frac{1}{2}a_2 =\frac{1}{4}c_1\label{p7-030a}\\[1mm]
\gamma_2 &=& \frac{1}{2}\left(a_3-\frac{1}{2}a_2^2\right) =\frac{1}{6}b_3+\frac{1}{6}\left(c_2-\frac{3}{8}c_1^2\right)\label{p7-030b}\\[1mm]
2\gamma_3 &=& a_4-a_2a_3+\frac{1}{3}a_2^3 =\frac{1}{24}\left(2c_1b_3+c_1^3-4c_1c_2+6c_3\right)\label{p7-030}.
\end{eqnarray}
 In view of Lemma \ref{p7-lemma001}, it follows from (\ref{p7-030a}) that $|\gamma_1|\le \frac{1}{2}$ and equality holds for a function $f$ defined by $zf'(z)=g(z)P(z)$, where $g(z)=z/(1-z^2)$ and $P(z)=(1+z)/(1-z)$. Since $g$ is an odd starlike function, $|b_3|\le 1$ (see \cite[Chaptar 4, Theorem 3, page 35]{Goodman-book}). Using Lemma \ref{p7-lemma003}, it follows from (\ref{p7-030b}) that
$$
|\gamma_2|\le \frac{1}{6}|b_3|+\frac{1}{6}\left|c_2-\frac{3}{8}c_1^2\right| \le \frac{1}{6}+\frac{1}{3}=\frac{1}{2}
$$
and equality holds for a function $f$ defined by $zf'(z)=g(z)P(z)$, where $g(z)=z/(1-z^2)$ and $P(z)=(1+z^2)/(1-z^2)$.

Writing $c_2$ and $c_3$ in terms of $c_1$ with the help of Lemma \ref{p7-lemma010}, it follows from (\ref{p7-030}) that
\begin{equation}\label{p7-035}
48\gamma_3= 2c_1b_3 +\frac{1}{2}c_1^3+c_1x(4-c_1^2)-\frac{3}{2}c_1x^2(4-c_1^2)+3(4-c_1^2)(1-|x|^2)t,
\end{equation}
where $|x|\le 1$ and $|t|\le 1$. Since the class $\mathcal{F}$ is invariant under rotation, without loss of generality we can assume that $c_1=c$, where $0\le c\le 2$. Taking modulus on both the sides of (\ref{p7-035}) and then applying triangle inequality and  $|b_3|\le 1$, it follows that
$$
48|\gamma_3|\le 2c+\left|\frac{1}{2}c^3+cx(4-c^2)-\frac{3}{2}cx^2(4-c^2)\right|+3(4-c^2)(1-|x|^2),
$$
where we have also used the fact $|t|\le 1$. Let $x=re^{i\theta}$ where $0\le r\le 1$ and $0\le\theta\le 2\pi$. For simplicity, by writing $\cos\theta=p$ we obtain
\begin{equation}\label{p7-042}
48|\gamma_3|\le \psi(c,r)+\left|\phi(c,r,p)\right|=:F(c,r,p),
\end{equation}
where $\psi(c,r)=2c+ 3(4-c^2)(1-r^2)$ and
\begin{align*}
\hspace{-1cm} \phi(c,r,p)
&=\left(\frac{1}{4}c^6+c^2r^2(4-c^2)^2+\frac{9}{4}c^2r^4(4-c^2)^2+c^4(4-c^2)rp\right.\\
&\qquad\quad \left.-\frac{3}{2}c^4r^2(4-c^2)(2p^2-1)-3c^2(4-c^2)r^3p \right)^{1/2}.
\end{align*}
Thus we need to find the maximum value of $F(c,r,p)$ over the rectangular cube $R:=[0,2]\times[0,1]\times[-1,1]$.

By elementary calculus it is  easy to verify the following:
\begin{align*}
&\max_{0\le r\le 1} \psi(0,r)=\psi\left(0,0\right)=12,\quad \max_{0\le r\le 1} \psi(2,r)=4,\\[2mm]
&\max_{0\le c\le 2} \psi(c,0)=\psi\left(\frac{1}{3},0\right)=\frac{37}{3},\quad \max_{0\le c\le 2} \psi(c,1)=\psi(2,1)=4 \quad\mbox { and }\\[2mm]
&\max_{(c,r)\in[0,2]\times[0,1]} \psi(c,r)=\psi\left(\frac{1}{3},0\right)=\frac{37}{3}.
\end{align*}
We first find the maximum value of $F(c,r,p)$ on the boundary of $R$, i.e on the six faces of the rectangular cube $R$.

On the face $c=0$, we have $F(0,r,p)=\psi(0,r)$ for  $(r,p)\in R_1:=[0,1]\times[-1,1]$. Thus
$$
\max_{(r,p)\in R_1} F(0,r,p)= \max_{0\le r\le 1} \psi(0,r)=\psi\left(0,0\right)=12.
$$

On the face $c=2$, we have $F(2,r,p)= 8$ for  $(r,p)\in R_1$.

On the face $r=0$, we have $F(c,0,p)= 2c+3(4-c^2)+\frac{1}{2}c^3$ for  $(c,p)\in R_2:=[0,2]\times[-1,1]$. Note that $F(c,0,p)$ is independent of $p$. Thus, by using elementary calculus it is easy to see that
$$
\max_{(c,p)\in R_2} F(c,0,p)= F\left(\frac{2}{3} (3-\sqrt{6}),0,p\right)=\frac{8}{9} \left(9+\sqrt{6}\right)=12.3546.
$$

On the face $r=1$, we have $F(c,1,p)=\psi(c,1)+ |\phi(c,1,p)|$ for  $(c,p)\in R_2$. We first prove that $\phi(c,1,p)\ne 0$ in the interior of $R_2$.
On the contrary, if $\phi(c,1,p)=0$ in the interior of $R_2$ then
$$
|\phi(c,1,p)|^2=\left|\frac{1}{2}c^3+ce^{i\theta}(4-c^2)-\frac{3}{2}ce^{2i\theta}(4-c^2)\right|^2=0
$$
and hence
\begin{eqnarray*}
\frac{1}{2}c^3+cp(4-c^2)-\frac{3}{2}c(4-c^2)(2p^2-1)&=&0 ~\mbox{ and } \\ c(4-c^2)\sin\theta-\frac{3}{2}c(4-c^2)\sin2\theta &=&0.
\end{eqnarray*}
On further simplification, this reduces to
$$
\frac{1}{2}c^2+p(4-c^2)-\frac{3}{2}(4-c^2)(2p^2-1)=0 \quad\mbox{and}\quad 1-3p=0,
$$
which is equivalent to $p=1/3$ and $c^2=6$. This contradicts the range of $c\in (0,2)$.  Thus $\phi(c,1,p)\ne 0$ in the interior of $R_2$.

Next, we find the maximum value $F(c,1,p)$ in the interior of $R_2$. Suppose that $F(c,1,p)$  has the maximum at an interior point of $R_2$. Then at such point $\frac{\partial F(c,1,p)}{\partial c}=0$  and $\frac{\partial F(c,1,p)}{\partial p}=0$. From $\frac{\partial F(c,1,p)}{\partial p}=0$ (for points in the interior of $R_2$), a straight forward calculation gives
\begin{equation}\label{p7-050}
p=\frac{2 \left(c^2-3\right)}{3 c^2}.
\end{equation}
Substituting the value of $p$ as given in (\ref{p7-050}) in the relation $\frac{\partial F(c,1,p)}{\partial c}=0$ and further simplification gives
$$
2c-3c^3+ \sqrt{6(c^2+2)}=0.
$$
Taking the last term on the right hand side and squaring on both the sides yields
\begin{equation}\label{p7-052}
9 c^6-12 c^4-2 c^2-12=0.
\end{equation}
This equation has exactly one root in $(0,2)$ which can be shown using the well-known Strum theorem for isolating real roots and hence for the sake of brevity we omit the details. By solving the equation (\ref{p7-052}) numerically, we obtain the approximate root $1.3584$ in $(0,2)$  and the corresponding value of $p$ obtained from (\ref{p7-050}) is $-0.4172$. Thus the extremum points of $F(c,1,p)$ in the interior of $R_2$ lie in a small neighborhood of the points $A_1=(1.3584,1,-0.4172)$ (on the plane $r=1$). Clearly $F(A_1)=9.3689$. Since the function $F(c,1,p)$ is uniformly continuous on $R_2$, the value of $F(c,1,p)$ would not vary too much in the neighborhood of the point $A_1$.

Next we find the maximum value of $F(c,1,p)$ on the boundary of $R_2$. Clearly, $F(0,1,p)=0$, $F(2,1,p)=8$,
$$
F(c,1,-1)=
\begin{cases}
2c+ c(10-3c^2)& \mbox{for}\quad 0\le c\le \sqrt{\frac{10}{3}}\\[2mm]
2c-c(10-3c^2)& \mbox{for}\quad \sqrt{\frac{10}{3}}< c\le 2
\end{cases}
$$
and
$$
F(c,1,1)=
\begin{cases}
2c+c(2-c^2)& \mbox{for}\quad 0\le c\le \sqrt{2}\\[2mm]
2c-c(2-c^2)& \mbox{for}\quad \sqrt{2}< c\le 2.
\end{cases}
$$
By using elementary calculus we find that
$$
\max_{0\le c\le 2}F(c,1,-1)= F\left(\frac{2\sqrt{3}}{3},1,-1\right)=\frac{16\sqrt{3}}{3}=9.2376\quad\mbox{ and }
$$
$$
\max_{0\le c\le 2}F(c,1,1)= F\left(\frac{2\sqrt{3}}{3},1,1\right)=\frac{16\sqrt{3}}{9} =3.0792.
$$
Therefore
$$
\max_{(c,p)\in R_2} F(c,1,p) \approx 9.3689.
$$

On the face $p=-1$,
$$
F(c,r,-1)=
\begin{cases}
\psi(c,r)+\eta_1(c,r) & \mbox{ for }\quad \eta_1(c,r)\ge 0\\[2mm]
\psi(c,r)-\eta_1(c,r)& \mbox{ for }\quad \eta_1(c,r)< 0,
\end{cases}
$$
where $\eta_1(c,r)=c^3 (3r^2+2r+1)-4cr(3r+2)$ and $(c,r)\in R_3:=[0,2]\times[0,1]$. To find the maximum value of $F(c,r,-1)$ in the interior of $R_3$ we need to solve the pair of equations $\frac{\partial F(c,r,-1)}{\partial c}=0$ and $\frac{\partial F(c,r,-1)}{\partial r}=0$ in the interior of $R_3$. But it is important to note that $\frac{\partial F(c,r,-1)}{\partial c}$ and $\frac{\partial F(c,r,-1)}{\partial r}$ may not exist at points in $S_1=\{(c,r)\in R_3: \eta_1(c,r)=0\}$. Solving these pair of equations, we find that
\begin{eqnarray*}
\max_{(c,r)\in  {\rm int \,}R_3\setminus S_1} F(c,r,-1) &=& F\left(\frac{1}{3}(\sqrt{82}-8),\frac{1}{57}(\sqrt{82}-5),-1\right)\\
&=&\frac{4}{81} (41 \sqrt{82}-121)=12.359.
\end{eqnarray*}
Now we find the maximum value of $F(c,r,-1)$ on the boundary of $R_3$ and on the set $S_1$. Note that
$$
\max_{(c,r)\in S_1} F(c,r,-1)\le \max_{(c,r)\in R_3} \psi(c,r)=\frac{37}{3}=12.33.
$$
On the other hand by using elementary calculus, as before, we find that
\begin{align*}
& \max_{0\le r \le 1} F(0,r,-1)=\max_{0\le r \le 1} 12(1-r^2)=F(0,0,-1)=12,\quad \max_{0\le r \le 1} F(2,r,-1)=8,\\
& \max_{0\le c \le 2} F(c,0,-1)=\max_{(c,p)\in R_2} F(c,0,p)= F\left(\frac{2}{3} (3-\sqrt{6}),0,-1\right)=\frac{8}{9} \left(9+\sqrt{6}\right)=12.3546\\
& \mbox{ and } \quad \max_{0\le c \le 2} F(c,1,-1)=F\left(\frac{2\sqrt{3}}{3},1,-1\right)=\frac{16\sqrt{3}}{3}=9.2376.
\end{align*}
Hence, by combining the above cases we obtain
\begin{eqnarray*}
\max_{(c,r)\in  R_3} F(c,r,-1)&=&F\left(\frac{1}{3}(\sqrt{82}-8),\frac{1}{57}(\sqrt{82}-5),-1\right)\\
&=&\frac{4}{81} (41 \sqrt{82}-121)=12.359.
\end{eqnarray*}

On the face $p=1$,
$$
F(c,r,1)=
\begin{cases}
\psi(c,r)+\eta_2(c,r) & \mbox{ for }\quad \eta_2(c,r)\ge 0\\[2mm]
\psi(c,r)-\eta_2(c,r)& \mbox{ for }\quad \eta_2(c,r)< 0,
\end{cases}
$$
where $\eta_2(c,r)=c^3 (3r^2-2r+1)-4cr(3r-2)$ for $(c,r)\in R_3$. To find the maximum value of $F(c,r,1)$ in the interior of $R_3$ we need to solve the pair of equations $\frac{\partial F(c,r,1)}{\partial c}=0$ and $\frac{\partial F(c,r,1)}{\partial r}=0$ in the interior of $R_3$. But it is important to note that $\frac{\partial F(c,r,1)}{\partial c}$ and $\frac{\partial F(c,r,1)}{\partial r}$ may not exist at points in $S_2=\{(c,r)\in R_3: \eta_2(c,r)=0\}$. Solving these pair of equations, we find that
\begin{eqnarray*}
\max_{(c,r)\in  {\rm int \,}R_3\setminus S_2} F(c,r,1)&=&F\left(\frac{1}{3}(8-\sqrt{46}),\frac{1}{75}(11-\sqrt{46}),1\right)\\
&=&\frac{4}{81}(95+23 \sqrt{46})=12.3947.
\end{eqnarray*}
Now, we find the maximum value of $F(c,r,1)$ on the boundary of $R_3$ and on the set $S_2$. By noting that (see earlier cases)
\begin{align*}
& \max_{(c,r)\in S_2} F(c,r,1)\le \max_{(c,r)\in R_3} \psi(c,r)=\frac{37}{3}=12.33,\\
& \max_{0\le r \le 1} F(0,r,1)=12,\quad \max_{0\le r \le 1} F(2,r,1)=8,\\
& \max_{0\le c \le 2} F(c,0,1)=\frac{8}{9} \left(9+\sqrt{6}\right)=12.3546,\\
& \max_{0\le c \le 2} F(c,1,1)=\frac{16\sqrt{3}}{9} =3.0792
\end{align*}
and combining all the cases, we find that
\begin{eqnarray*}
\max_{(c,r)\in  R_3} F(c,r,1)&=&F\left(\frac{1}{3}(8-\sqrt{46}),\frac{1}{75}(11-\sqrt{46}),1\right)\\
&=&\frac{4}{81}(95+23 \sqrt{46})=12.3947.
\end{eqnarray*}

Let $S'=\{(c,r,p)\in R: \phi(c,r,p)=0\}$. Then
$$
\max_{(c,r,p)\in S'} F(c,r,p)\le \max_{(c,r)\in R_3} \psi(c,r)=\psi\left(0,\frac{1}{3}\right)=\frac{37}{3}=12.33.
$$
We prove that $F(c,r,p)$ has no maximum value at any interior point of $R\setminus S'$. Suppose that $F(c,r,p)$  has a maximum value at an interior point of $R\setminus S'$. Then at such point $\frac{\partial F(c,r,p)}{\partial c}=0$, $\frac{\partial F(c,r,p)}{\partial r}=0$ and $\frac{\partial F(c,r,p)}{\partial p}=0$. Note that $\frac{\partial F(c,r,p)}{\partial c}$, $\frac{\partial F(c,r,p)}{\partial r}$ and $\frac{\partial F(c,r,p)}{\partial p}$ may not exist at points in $S'$. From  $\frac{\partial F(c,r,p)}{\partial p}=0$ (for points in the interior of $R\setminus S'$), a straight forward but laborious calculation gives
\begin{equation}\label{p7-060}
p= \frac{3 c^2 r^2+c^2-12 r^2}{6 c^2 r}.
\end{equation}
Substituting the value of $p$ as given in  (\ref{p7-060}) in  $\frac{\partial F(c,r,p)}{\partial r}=0$ and simplifying, we obtain
$$
(4-c^2)r( \sqrt{6(c^2+2)}-6)=0.
$$
This equation has no solution in the interior of $R\setminus S'$ and hence $F(c,r,p)$ has no maximum in the interior of $R\setminus S'$.

Thus combining all the above cases we find that
\begin{eqnarray*}
\max_{(c,r,p)\in  R} F(c,r,p)&=&F\left(\frac{1}{3}(8-\sqrt{46}),\frac{1}{75}(11-\sqrt{46}),1\right)\\
&=&\frac{4}{81}(95+23 \sqrt{46})=12.3947
\end{eqnarray*}
and hence from (\ref{p7-042}) we obtain
\begin{equation}\label{p7-065}
|\gamma_3|\le \frac{1}{972}(95+23 \sqrt{46})=0.2582.
\end{equation}

We now show that the inequality (\ref{p7-065}) is sharp. An examination of the proof shows that equality holds in (\ref{p7-065}) if we choose $b_3=1$, $c_1=c=\frac{1}{3}(8-\sqrt{46})$, $x=\frac{1}{75}(11-\sqrt{46})$ and $t=1$ in (\ref{p7-035}). For such values of $c_1, x$ and $t$,  Lemma \ref{p7-lemma010} gives  $c_2=\frac{1}{27}(134-19 \sqrt{46})$ and $c_3=\frac{2}{243}(721-71\sqrt{46})$. A function $P\in\mathcal{P}$ having the first three coefficients $c_1, c_2$ and $c_3$ as above is given by
\begin{align}\label{p7-140}
P(z) &=(1-2\lambda) \frac{1+z}{1-z}+\lambda \frac{1+uz}{1-uz}+\lambda \frac{1+\overline{u}z}{1-\overline{u}z}\\
& =1+\frac{1}{3}(8-\sqrt{46})z +\frac{1}{27}(134-19 \sqrt{46})z^2 +\frac{2}{243}(721-71\sqrt{46})z^3+\cdots,\nonumber
\end{align}
where $\lambda=\frac{1}{10}(-4+\sqrt{46})$ and $u=\alpha+i\sqrt{1-\alpha^2}$ with $\alpha=\frac{1}{18}(-1-\sqrt{46})$. Hence the equality holds in (\ref{p7-065}) for a function $f$ defined by $zf'(z)=g(z)P(z)$, where $g(z)=z/(1-z^2)$ and $P(z)$ is given by (\ref{p7-140}). This completes the proof.
\end{proof}

\noindent\textbf{Acknowledgement:}
The authors thank Prof. K.-J. Wirths for useful discussion and suggestions.
The first author thank University Grants Commission for the financial support through UGC-SRF Fellowship. The second author thank SERB (DST) for financial support.


\begin{thebibliography}{99}

\bibitem{Ali-Vasudevarao-2016}
{\sc Md Firoz Ali} and {\sc A. Vasudevarao}, On logarithmic coefficients of some close-to-convex functions, arXiv:1606.05162.


\bibitem{Bazilevich-1965}
{\sc I.E. Bazilevich}, Coefficient dispersion of univalent functions, {\it Mat. Sb.} {\bf 68}(110) (1965), 549--560.


\bibitem{Bazilevich-1967}
{\sc I.E. Bazilevich}, On a univalence criterion for regular functions and the dispertion of their derivatives, {\it Mat. Sb.} {\bf 74}(116) (1967), 133--146.




\bibitem{Branges-1985}
{\sc L. de Branges}, A proof of the Bieberbach conjecture. {\it Acta Math.} {\bf 154} (1985), no. 1-2, 137--152.


\bibitem{Duren-book}
{\sc P. L. Duren}, {\it Univalent functions} (Grundlehren der
mathematischen Wissenschaften 259, New York, Berlin, Heidelberg, Tokyo), Springer-Verlag, 1983.


\bibitem{Duren-Leung-1979}
{\sc P. L. Duren} and {\sc Y. J. Leung}, Logarithmic coefficients of univalent functions, {\it J. Analyse Math.} {\bf 36} (1979), 36--43.



\bibitem{Elhosh-1996}
{\sc M. M. Elhosh}, On the logarithmic coefficients of close-to-convex functions, {\it J. Austral. Math. Soc. Ser. A} {\bf 60} (1996), 1--6.







\bibitem{Girela-2000}
{\sc D. Girela}, Logarithmic coefficients of univalent functions, {\it Ann. Acad. Sci. Fenn. Math.} {\bf 25} (2000), 337--350.


\bibitem{Goodman-book}
{\sc A. W. Goodman}, Univalent Functions, Vols. I and II. Mariner Publishing Co. Tampa, Florida, 1983.




\bibitem{Hayman-book}
{\sc W.K. Hayman}, Multivalent Functions, Cambrige University Press, 1958.








\bibitem{Libera-Zlotkiewicz-1982}
{\sc R.J. Libera} and {\sc E.J. Z{\l}otkiewicz}, Early coefficients of the inverse of a regular convex function, {\it Proc. Amer. Math. Soc.}
{\bf 85} (2) (1982) 225--230.





\bibitem{Ma-Minda-1992}
{\sc W. Ma} and {\sc D. Minda}, A unified treatment of some special classes of univalent functions,
{\it Proceedings of the Conference on Complex Analysis}, Tianjin, 1992 (Int. Press), 157--169.






\bibitem{Roth-2007}
{\sc O. Roth}, A sharp inequality for the logarithmic coefficients of univalent functions, {\it Proc. Amer. Math. Soc.} {\bf 135}(7) (2007), 2051--2054.

\bibitem{Thomas-2016}
{\sc D.K. Thomas}, On the logarithmic coefficients of close to convex functions, {\it Proc. Amer. Math. Soc.} {\bf 144} (2016), 1681--1687.







\end{thebibliography}
\end{document}